\newcounter{thmnum}
\newtheorem{thm}[thmnum]{Theorem}
\newtheorem{lem}[thmnum]{Lemma}
\newcommand{\Gal}{\textup{Gal}}
\newcommand{\Disc}{\textup{Disc}}
\renewcommand{\Re}{\textup{Re} \,}
\renewcommand{\Im}{\textup{Im} \,}
\newcommand{\pf}{\noindent{\it Proof:} \,}
\newcommand{\rki}{\noindent{\bf Remark 1.} \,}
\newcommand{\rkii}{\noindent{\bf Remark 2.} \,}
\begin{document}

\title{On the density of abelian $\ell$-extensions}

\author{Chih-Yun Chuang}
\address{Department of Mathematics, National Taiwn University, Taipei City, Taiwan 10617}
\email{cychuang@ntu.edu.tw}

\author{Yen-Liang Kuan}
\address{Department of Mathematics, National Taiwan University, Taipei City, Taiwan 10617}
\email{ylkuan@ntu.edu.tw}

\keywords{Asymptotic Results, Function Fields, Density theorems}

\subjclass[2010]{11N45, 14H05, 11R45}

\begin{abstract}
We derive an asymptotic formula which counts the number of abelian extensions of prime degrees over rational function fields. Specifically, let $\ell$ be a rational prime and $K$ a rational function field $\Bbb F_q(t)$ with $\ell \nmid q$. Let $\Disc_f\left(F/K\right)$ denote the finite discriminant of $F$ over $K$. Denote the number of abelian $\ell$-extensions $F/K$ with $\deg\left(\Disc_f(F/K)\right) = (\ell-1)\alpha n$ by $a_{\ell}(n)$, where $\alpha=\alpha(q, \ell)$ is the order of $q$ in the multiplicative group $\left(\Bbb Z/\ell \Bbb Z\right)^\times$. We give a explicit asymptotic formula for $a_\ell(n)$. In the case of cubic extensions with $q\equiv 2 \pmod 3$, our formula gives an exact analogue of Cohn's classical formula.
\end{abstract}

\maketitle

\section{Introduction}

In arithmetic statistics, the problem of counting number fields is of particularly interest. Let $N_n(X)$ denote the number of isomorphism classes of number fields of degree $n$ over $\Bbb Q$ having absolute discriminant at most $X$. It is conjectured that $N_n(X)/X$ tends to a finite limit as $X$ tends to infinity and it is positive for $n > 1$. This conjecture is trivial for $n = 1$ and it is well-known for $n=2$. For the degree $n = 3$ it is a theorem of Davenport and Heilbronn \cite{DH}. In the past decade, Bhargava \cite{B1, B2} proved this conjecture for $n = 4$ and $5$.

Prompted by the work of Davenport and Heilbronn \cite{DH} on the density of cubic fields. Cohn showed that the cyclic cubic fields are rare compared to all cubic fields over $\Bbb Q$. More precisely, let $G$ be a fixed finite abelian group of order $m$ and let $F$ range over all abelian number fields with Galois group $\Gal\left(F/\Bbb Q\right)=G$. Denote by $\Disc\left(F/\Bbb Q\right)$ the absolute value of the discriminant of $F$ over $\Bbb Q$ and define $N_G(X)$ to be the number of abelian number fields $F$ with $\Disc\left(F/\Bbb Q\right) \leq X$. When $G= \Bbb Z/3\Bbb Z$, Cohn \cite{Coh} proved that
\begin{equation}\label{Cohn}
N_{G}(X) \sim \frac{11\pi}{72\sqrt{3}\zeta(2)}\prod_{p \equiv 1 \left(6\right)}\frac{\left(p+2\right)\left(p-1\right)}{p\left(p+1\right)}\sqrt{X}, \ \ \ \hbox{as $X \rightarrow \infty$,}
\end{equation}
where the product is taking over all primes $p \equiv 1 \pmod 6$. For arbitrary given finite abelian groups $G$, an asymptotic formula of $N_G(X)$ has been worked out by M\"{a}ki \cite{Mak}. In particular, fix $\ell$ a prime and $G=\Bbb Z/\ell\Bbb Z$, there is a constant $c>0$ such that (cf. Wright \cite[Theorem I.2]{Wri})
\begin{equation}\label{p}
N_{G}(X) \sim c \cdot X^{\frac{1}{\ell-1}}, \ \ \ \hbox{as $X \rightarrow \infty$}.
\end{equation}

In this paper, we prove very precise versions of the asymptotic formulas (\ref{Cohn}), (\ref{p}) for function fields. Let $K$ be a rational function field $\Bbb F_q(t)$ over the finite field $\Bbb F_q$. Fix a rational prime $\ell \nmid q$. Let $F$ be an abelian $\ell$-extension of $K$ and denote by $\mathcal{O}_F$ the integral closure of $A=\Bbb F_q[t]$ in $F$. Let $\Disc_f\left(F/K\right)$ denote the finite discriminant of $F$ over $K$ which means the discriminant of $\mathcal{O}_F$ over $A$. Note that $\Disc_f\left(F/K\right)$ is a $(\ell-1)$-th power for some square-free polynomial in $A$ since all ramified primes are totally tamely ramified in $F/K$. Denote $\alpha=\alpha(q, \ell)$ to be the order of $q$ in the multiplicative group $\left(\Bbb Z/\ell \Bbb Z\right)^\times$. Then the degree of ramified primes in $F$ must be divided by $\alpha$. Adapting an idea of Cohn, we shall study the partial Euler product
$$
f(s) := \prod_{\alpha \mid \deg(P)} \left(1 + (\ell - 1)q^{-\deg(P)s}\right),
$$
where the product is taking over all finite primes $P$ of $K$ such that $\alpha\mid\deg(P)$.

We will prove that $f(s)$ converges absolutely for $\Re s > 1$ and is analytic in the region $\{s \in B : \Re s = 1\}$ except for a pole of order $w:=(\ell - 1)/\alpha$ at $s = 1$ where $B := \{s\in\Bbb C : -\pi /\log q^{\alpha} \leq \Im s < \pi /\log q^{\alpha}\}$. The last statement implies that
$$
r(K, \ell) := \lim_{s\rightarrow 1} (s-1)^wf(s) > 0.
$$
Then we have the following main result

\begin{thm}\label{prime}
Let $\ell$ be a rational prime and $K=\Bbb F_q(t)$ with $\ell \nmid q$. Write $\alpha=\alpha(q, \ell)$ for the order of $q$ in the multiplicative group $\left(\Bbb Z/\ell \Bbb Z\right)^\times$ and set $w=(\ell - 1)/\alpha$. Denote the number of abelian $\ell$-extensions $F/K$ with $\deg\left(\Disc_f(F/K)\right) = (\ell-1)\alpha n$ by $a_{\ell}(n)$. Then, for any $\epsilon > 0$, we have
$$
a_{\ell}(n) = \begin{cases}
2 r(K, \ell)\log q \cdot q^{\alpha n} + O(q^{(\frac{\alpha}{2}+\epsilon) n}) \ \ \ &\hbox{if $w=1$,} \\
\frac{2 r(K,\ell)\log^w q^\alpha}{(\ell-1)(w-1)!}q^{\alpha n}P(n) + O(q^{(\frac{\alpha}{2}+\epsilon) n}) \ \ \ &\hbox{if $w > 1$,}
\end{cases}
$$
where $r(K,\ell)$ is defined as above and $P(X) \in \Bbb R[X]$ is a monic polynomial of degree $w-1$.
\end{thm}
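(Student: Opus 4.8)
The plan is to pass from the partial Euler product $f(s)$ to the Dirichlet series $\sum_n a_\ell(n) u^n$ by relabeling the exponent, and then extract coefficient asymptotics by a contour/Tauberian argument exploiting the meromorphic continuation of $f$ to the strip $B$. First I would observe that, up to the factor $2$ coming from the fact that each abelian $\ell$-extension is obtained from $\ell-1$ (equivalently, after accounting for $\Gal$-conjugacy, exactly $\ell-1$ divided by the relevant automorphism count, hence the stated combinatorial normalization) Kummer characters, counting abelian $\ell$-extensions $F/K$ with a prescribed finite discriminant of degree $(\ell-1)\alpha n$ amounts to counting square-free "conductor" polynomials supported on primes $P$ with $\alpha\mid\deg P$, with the number of extensions attached to a given conductor being controlled by $(\ell-1)^{\omega(\cdot)}$. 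Thus $f(s)$, with $q^{-s}$ replaced by the single variable $u=q^{-\alpha s}$, becomes (the relevant piece of) the generating function $\sum_{n\ge 0} a_\ell(n) u^{n}$, up to the elementary bookkeeping factor and a harmless Euler factor at the infinite place; I would make this identification precise in a short lemma.

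Next I would use the stated analytic input: $f(s)$ is analytic on $\{\Re s=1\}\cap B$ except for a pole of order $w=(\ell-1)/\alpha$ at $s=1$, and $r(K,\ell)=\lim_{s\to1}(s-1)^w f(s)>0$. In the $u$-variable the series $\sum a_\ell(n)u^n$ is analytic on $|u|=q^{-\alpha}$ except for a pole of order $w$ at $u=q^{-\alpha}$ (the condition on $\Im s$ in $B$ is exactly what guarantees this is the \emph{only} boundary singularity, since $u=q^{-\alpha s}$ is periodic with period $2\pi i/(\alpha\log q)$ and $B$ is a fundamental domain for that period). I would then write a Cauchy integral $a_\ell(n)=\frac{1}{2\pi i}\oint \big(\sum_m a_\ell(m)u^m\big)u^{-n-1}\,du$ over a circle of radius slightly less than $q^{-\alpha}$, push the contour outward to radius $q^{-\alpha/2}$ (permissible because $f$, being an Euler product over primes with $\alpha\mid\deg P$, extends to $\Re s>1/2$ once the pole at $s=1$ is divided out — more precisely $f(s)\zeta_A(\alpha s)^{-w}$-type factorization, where $\zeta_A$ is the zeta function of $A=\Bbb F_q[t]$, is analytic for $\Re s>1/2$), and collect the residue at $u=q^{-\alpha}$. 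The residue of a pole of order $w$ produces the main term $c\, q^{\alpha n} n^{w-1}(1+\cdots)$; writing it as $q^{\alpha n}P(n)$ with $P$ monic of degree $w-1$ after pulling out the leading constant $\frac{2r(K,\ell)\log^w q^\alpha}{(\ell-1)(w-1)!}$ (and the cleaner closed form $2r(K,\ell)\log q\cdot q^{\alpha n}$ when $w=1$) is then a direct residue computation; the remaining integral on $|u|=q^{-\alpha/2}$ is $O(q^{(\alpha/2+\epsilon)n})$, the $\epsilon$ absorbing the at-most-polynomial growth of $f$ on vertical lines in $1/2<\Re s<1$.

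The main obstacle is the analytic continuation of $f(s)$ past $\Re s=1$ with controlled growth: I need $f(s)$, after removing the pole of order $w$ at $s=1$, to be holomorphic and polynomially bounded on $\tfrac12+\delta\le\Re s\le 1$ (within the strip $B$), so that the contour at radius $q^{-\alpha/2}$ contributes only the claimed error. This requires comparing the Euler product $\prod_{\alpha\mid\deg P}(1+(\ell-1)q^{-\deg P\, s})$ with a suitable power of $\zeta_A(\alpha s)=(1-q^{1-\alpha s})^{-1}$ — equivalently with Dirichlet $L$-functions of the characters of $(\Bbb Z/\ell\Bbb Z)^\times$ cutting out the congruence condition $\alpha\mid\deg P$ — and checking that the ratio is an absolutely convergent product for $\Re s>1/2$; the subtlety is that $\zeta_A$ and those $L$-functions are polynomials in $q^{-s}$ (so entire, with no zeros for $\Re s>1/2$ other than the explicit ones), which makes the $\Bbb F_q[t]$ case cleaner than the classical one but still demands care in tracking the local factors at primes $P$ with $\alpha\nmid\deg P$ and in verifying the order of vanishing/pole matches $w$. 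Once that comparison is in hand, the rest is the standard coefficient-extraction argument sketched above, and the $w=1$ versus $w>1$ dichotomy is purely a matter of how many derivatives the residue computation requires.
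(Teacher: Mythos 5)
Your proposal follows essentially the same route as the paper: the class-field-theoretic bookkeeping reduces $a_\ell(n)$ to $\frac{2}{\ell-1}$ times the coefficients of $f(s)$, one factors out a $w$-th power of a zeta function to continue $f$ past $\Re s=1$, and a Cauchy-integral/residue argument at radius $q^{-\alpha/2}$ (packaged in the paper as the Tauberian Theorem \ref{Tau}) gives the main term $q^{\alpha n}Q(n)$ with the stated leading coefficient and the $O(q^{(\alpha/2+\epsilon)n})$ error. The one correction needed is that the comparison factor must be $\zeta_{A_\alpha}(s)$, the zeta function of $\Bbb F_{q^\alpha}[t]$, whose pole sits at $s=1$ (i.e.\ at $u=q^{-\alpha}$), rather than $\zeta_A(\alpha s)=(1-q^{1-\alpha s})^{-1}$, whose pole is at $s=1/\alpha$; the paper's Lemma \ref{f} realizes this via the factorization $\zeta_{A_\alpha}(s)=\prod_{d\mid\alpha}\mathcal{L}_d(s)$ according to $\gcd(\alpha,\deg P)$, which is exactly the precise form of your alternative suggestion of using characters cutting out the congruence condition on $\deg P$.
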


\rki Note that, as in formula (\ref{p}), the growth order of the number $a_\ell(n)$ is a $(\ell-1)$-root of the number of polynomials of degree $(\ell-1)\alpha n$ when $w=1$.

\rkii By our much sharper Wiener-Ikehara Tauberian theorem (see Appendix for a function field version), the constants $r(K, \ell)$ and $P(X)$ can be explicitly determined (see Section 2 for details).

In the case of cyclic cubic fields, we derive explicit formulas for $r(K, \ell)$ and $P(X)$ as the following

\begin{thm}\label{cubic}
Let $K = \Bbb F_q(t)$ with $3\nmid q$ and $a_3(n)$ the number of cyclic cubic extensions $F/K$ with $\deg\left(\Disc_f(F/K)\right) = 2\alpha n$ where $\alpha = 1$ if $q \equiv 1 \pmod 3$ or $2$ otherwise. Let $P$ denote the finite primes of $K$ and $\zeta_A(s)$ is the zeta function of $A$.
Then, for any $\epsilon>0$, we have

\noindent\textup{(i)} If $q\equiv 1 \pmod 3$, let
$$
g(s) = \prod_{P}\left(1-3q^{-2\deg P s}+2q^{-3\deg P s}\right) \ \ \  \hbox{for $\Re s > 1/2$},
$$
where the product is taking over all finite primes $P$ of $K$. Then one has
$$
a_{3}(n) = g(1)q^n\left(n + 1 + \frac{g^\prime(1)}{g(1) \log q} \right)+O(q^{(\frac{1}{2}+\epsilon) n}) \ \ \ \hbox{as $n \rightarrow \infty$}.
$$

\noindent\textup{(ii)} If $q\equiv 2 \pmod 3$, one has
$$
a_{3}(n) = \frac{1}{\zeta_{A}(2)}\left[\prod\limits_{\deg(P) : even}\frac{\left(q^{\deg(P)} + 2\right)\left(q^{\deg(P)} - 1\right)}{q^{\deg(P)}\left(q^{\deg(P)} + 1\right)}\right]q^{2n} + O(q^{(1+\epsilon) n})) \ \ \ \hbox{as $n\rightarrow \infty$}.
$$
\end{thm}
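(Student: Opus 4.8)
The plan is to reduce Theorem \ref{cubic} to Theorem \ref{prime} by computing the Dirichlet series $f(s)$ explicitly when $\ell = 3$, and then extracting the constants $r(K,3)$ and $P(X)$ from that explicit form via the Wiener–Ikehara Tauberian theorem in the Appendix. Recall $f(s) = \prod_{\alpha \mid \deg P}(1 + 2 q^{-\deg(P)s})$, so the whole problem is to relate this partial Euler product to the global zeta function $\zeta_A(s) = \prod_P (1 - q^{-\deg(P)s})^{-1} = (1-q^{1-s})^{-1}$, whose analytic behaviour near $s=1$ is completely understood.

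\textbf{Case (i): $q \equiv 1 \pmod 3$, so $\alpha = 1$ and $w = 2$.} Here the product runs over all finite primes $P$. I would write
$$
1 + 2u = \frac{1 - 3u^2 + 2u^3}{(1-u)^2} = \frac{(1-u)^2(1+2u)}{(1-u)^2}
$$
with $u = q^{-\deg(P)s}$ — more precisely factor $1 - 3u^2 + 2u^3 = (1-u)^2(1+2u)$ — so that
$$
f(s) = \prod_P (1 + 2 q^{-\deg(P)s}) = g(s) \prod_P (1 - q^{-\deg(P)s})^{-2} = g(s)\,\zeta_A(s)^2,
$$
where $g(s) = \prod_P(1 - 3q^{-2\deg(P)s} + 2 q^{-3\deg(P)s})$ converges absolutely for $\Re s > 1/2$ (the general term is $O(q^{-2\deg(P)\Re s})$ and $\sum_P q^{-2\deg(P)\sigma}$ converges for $\sigma > 1/2$). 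Since $\zeta_A(s) = 1/(1-q^{1-s})$ has a simple pole at $s=1$ with residue (in the variable $s$) $1/\log q$, we get $f(s) = g(s)/(1-q^{1-s})^2$, a pole of order $w = 2$ at $s=1$, and
$$
r(K,3) = \lim_{s\to 1}(s-1)^2 f(s) = \frac{g(1)}{(\log q)^2}.
$$
Plugging $w = 2$, $\alpha = 1$, $\ell = 3$ into the $w>1$ branch of Theorem \ref{prime} gives the leading term $\tfrac{2 r(K,3)(\log q)^2}{2\cdot 1!} q^n P(n) = g(1) q^n P(n)$ with $P$ monic linear. To pin down $P(X) = X + c$, I would run the Tauberian theorem keeping the subleading term: the generating function $\sum_n a_3(n) q^{-ns}$ essentially equals $f(s)$ up to the substitution $q^{-s} \rightsquigarrow q^{-\alpha n s}$-type bookkeeping, and the Laurent expansion $f(s) = \tfrac{g(1)}{(\log q)^2}(s-1)^{-2} + \big(\tfrac{g'(1)}{(\log q)^2} + \text{(lower-order from }\tfrac{1}{(1-q^{1-s})^2}\text{)}\big)(s-1)^{-1} + \cdots$ feeds directly into the two-term asymptotic, producing the stated $n + 1 + g'(1)/(g(1)\log q)$; the $+1$ comes from the expansion of $(1-q^{1-s})^{-2}$ (whose coefficient sequence is $n+1$), and the $g'(1)/(g(1)\log q)$ from the logarithmic derivative of $g$ at $s=1$.

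\textbf{Case (ii): $q \equiv 2 \pmod 3$, so $\alpha = 2$ and $w = (\ell-1)/\alpha = 1$.} Now the product in $f(s)$ is restricted to primes of even degree. I would again factor $1 + 2u = (1-3u^2+2u^3)/(1-u)^2$ but only over even-degree primes, so
$$
f(s) = \left[\prod_{\deg P\ \mathrm{even}}(1 - q^{-\deg(P)s})\right]^{-2}\prod_{\deg P\ \mathrm{even}}(1 - 3q^{-2\deg(P)s}+2q^{-3\deg(P)s}).
$$
The partial zeta product $\prod_{\deg P\ \mathrm{even}}(1-q^{-\deg(P)s})^{-1}$ is handled by splitting $\zeta_A(s) = \prod_{\deg P\ \mathrm{even}}(\cdots)^{-1}\prod_{\deg P\ \mathrm{odd}}(\cdots)^{-1}$ and using the standard identity relating it to $\zeta_A$ over the quadratic constant-field extension $\mathbb{F}_{q^2}(t)$; concretely $\prod_{\deg P\ \mathrm{even}}(1-q^{-\deg(P)s})^{-1}$ has a simple pole at $s=1$, and since $w=1$ the residue is exactly what we need. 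Matching with the $w=1$ branch $a_3(n) = 2 r(K,3)\log q \cdot q^{\alpha n} + O(\cdots) = 2 r(K,3)\log q \cdot q^{2n} + O(q^{(1+\epsilon)n})$, I then simplify the constant $2 r(K,3)\log q$. The computation of $r(K,3) = \lim_{s\to 1}(s-1)f(s)$ should collapse, after cancelling the even/odd splitting against $\zeta_A(1)$-type factors, to $\tfrac{1}{2\log q}\cdot\tfrac{1}{\zeta_A(2)}\prod_{\deg P\ \mathrm{even}}\tfrac{(q^{\deg P}+2)(q^{\deg P}-1)}{q^{\deg P}(q^{\deg P}+1)}$, because on even-degree primes the local factor reshuffles as $\tfrac{1-3q^{-2\deg P}+2q^{-3\deg P}}{(1-q^{-\deg P})} = \tfrac{(1-q^{-\deg P})(1+2q^{-\deg P})\cdot(\text{no, recheck})}{\cdots}$ — I would carefully verify this local algebra — and the residual $\zeta_A(2)^{-1}$ arises from the $s=2$ evaluation of the convergent piece together with the extra factor of $(1-q^{1-s})$ from the odd-degree primes evaluated at $s=1$. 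The constant $2\log q$ in the main-term coefficient then exactly cancels the $\tfrac{1}{2\log q}$, giving the clean Cohn-type formula.

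\textbf{Main obstacle.} The genuinely delicate part is \emph{case (ii)}: bookkeeping the even/odd-degree prime split so that the constant comes out in precisely Cohn's shape $\tfrac{1}{\zeta_A(2)}\prod_{\deg P\ \mathrm{even}}\tfrac{(q^{\deg P}+2)(q^{\deg P}-1)}{q^{\deg P}(q^{\deg P}+1)}$, rather than some equivalent but unilluminating infinite product. Both the extraction of the simple pole of $\prod_{\deg P\ \mathrm{even}}(1-q^{-\deg(P)s})^{-1}$ at $s=1$ (which needs the identity $\prod_P(1-q^{-\deg(P)s})^{-1}\prod_P(1-(-1)^{\deg P}q^{-\deg(P)s})^{-1} = \zeta_A(s)\cdot\zeta_A(s)|_{q\mapsto -q}$ or the analogous statement for $\mathbb{F}_{q^2}$) and the matching of the convergent Euler factor at $s=1$ require care with convergence on $\Re s = 1$ in the strip $B$. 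Everything else — the factorization $1-3u^2+2u^3 = (1-u)^2(1+2u)$, the order-of-pole count, and invoking Theorem \ref{prime} and the Appendix Tauberian theorem — is routine.
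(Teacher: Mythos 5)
Your proposal is correct and follows essentially the same route as the paper: the factorization $1-3u^2+2u^3=(1-u)^2(1+2u)$ to write $f(s)=\zeta_A(s)^2 g(s)$ in case (i), the even/odd prime splitting of $\zeta_{A_2}(s)$ (equivalently, completing the odd-degree product to $\zeta_A(2s)^{-1}$, which yields exactly the paper's identity $f(s)=\zeta_{A_2}(s)\zeta_A^{-1}(2s)\prod_{\deg(P):\,\mathrm{even}}\frac{(q^{\deg(P)s}+2)(q^{\deg(P)s}-1)}{q^{\deg(P)s}(q^{\deg(P)s}+1)}$) in case (ii), and the Appendix Tauberian theorem to extract the main and second-order terms. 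The local algebra you flag as needing verification does check out, and the constants collapse as you predict.
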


Note that the asymptotic formula (ii) of Theorem \ref{cubic} is a function filed analogue of Cohn's result. This occurs because, when $q \equiv 2 \pmod 3$, the infinite prime $\infty = 1/t$ is unramified in cyclic cubic fields which happen to be the same situation as for the number field $\Bbb Q$. We are also able to treat the case $q\equiv 1 \pmod 3$ in (i) of Theorem \ref{cubic} with a bigger main term and exact second order term.

We now briefly describe the contents of this paper. In the next section, we use a version of Tauberian theorem (Theorem \ref{Tau}) to prove Theorem \ref{prime}. In Section 3, we adapt an idea of Cohn to prove Theorem \ref{cubic}. In the appendix, we derive the needed function field version of Wiener-Ikehara Tauberian theorem.

\section{Counting Abelian $\ell$-extensions}

We fix the following notations in this paper:
$$
\aligned
\ell &: \hbox{a fixed rational prime.} \\
K &: \hbox{the rational function field $\Bbb F_q(t)$ of the characteristic $p\neq \ell$.} \\
\alpha=\alpha(q,\ell) &: \hbox{the order of $q$ in the multiplicative group $\left(\Bbb Z/\ell\Bbb Z\right)^{\times}$.}\\
A &: \hbox{the polynomial ring $\Bbb F_q[t]$.} \\
\infty &: \hbox{the infinite place $1/t$.} \\
\mathcal{O}_\infty &: \hbox{the valuation ring of $\infty$.} \\
P &: \hbox{a finie prime (place) of $K$.} \\
K_P &: \hbox{the completion of $K$ at $P$.} \\
\mathcal{O}_P^{\times}&: \hbox{the group of $P$-units in $K_P$.} \\
\Bbb A_K^{\times} &: \hbox{the id\`{e}le group of $K$.} \\
F &: \hbox{an abelian $\ell$-extension over $K$.} \\
\Disc_f\left(F/K\right) &: \hbox{the finite discriminant of $F$ over $K$ identifies as a polynomial in $A$.} \\
\endaligned
$$

Note that we always have $\Disc_f\left(F/K\right) = D^{\ell - 1}$ for some square-free polynomial $D=D(F) \in A$ since all ramified primes are totally tamely ramified in $F/K$. From class field theory, we know that each abelian $\ell$-extension $F$ over $K$ corresponds exactly to $\ell - 1$ surjective homomorphisms
$$
\phi : \Bbb A_K^{\times}/K^{\times} \rightarrow \Gal\left(F/K\right) \cong \Bbb Z/\ell\Bbb Z.
$$
Recall that the image of $\mathcal{O}_P^\times$ is equal to the inertia group of $P$ in $\Gal\left(F/K\right)$ by local class field theory. Let $K_{\infty}^+ = \{1/t\} \times (1+1/t\mathcal{O}_\infty)$ be the direct product of the cyclic group $\{1/t\}$ generated by $1/t$ and the pro-$p$ group $(1+1/t\mathcal{O}_\infty)$. Since $\Bbb A_K^\times$ can be written as a direct product of $K^\times$ and $\left(\prod_{P}\mathcal{O}_P^\times \times K_\infty^+\right)$, the number of abelian $\ell$-extensions $F$ over $K$ with $\Disc_f\left(F/K\right) = D^{\ell -1}$ is equal to
$$
\frac{2}{\ell -1} \cdot \#\{\phi : \prod_{P\mid D} \mathcal{O}_P^\times \rightarrow \Bbb Z/\ell\Bbb Z : \hbox{the restriction of $\phi$ to $\mathcal{O}_P^\times$ is surjective for all $P\mid D$}\}.
$$
Since the kernel of the canonical map $\mathcal{O}_P^\times \rightarrow \left(\mathcal{O}_P/P\right)^\times$ is a pro-$p$ group and $p \neq \ell$, a map $\mathcal{O}_P^\times \rightarrow \Bbb Z/\ell\Bbb Z$ must factor through $\left(\mathcal{O}_P/P\right)^\times \rightarrow \Bbb Z/\ell\Bbb Z$. As the restriction of $\phi$ from $ \prod_{P\mid D} \mathcal{O}_P^\times$ to $\mathcal{O}_P^\times$ is surjective, we note that $\alpha(q,\ell)$ must divide $\deg(P)$. Combining these discussions, we conclude that the number of abelian $\ell$-extensions $F$ over $K$ with $D(F)=P_1\cdots P_m$ and $\alpha(q, \ell) \mid \deg(P_i)$ for all $i$ is equal to $\left(\ell - 1\right)^{m-1}$.

We are interested in the partial Euler product
$$
f(s) := \prod_{\alpha \mid \deg(P)} \left(1 + (\ell - 1)q^{-\deg(P)s}\right)\ \ \ \hbox{for $\Re s > 1$}.
$$
Write $f(s) = \sum b_{\alpha n} q^{-\alpha ns}$, then $a_{\ell}(n) = 2 b_{\alpha n}/(\ell -1)$ for all $n > 0$, where $a_{\ell}(n)$ is the number of abelian $\ell$-extensions $F$ over $K$ with $\deg\left(\Disc_f(F/K)\right) = (\ell - 1)\alpha n$. Let $\zeta_A(s)$ be the zeta function of $A$ which is defined by
$$
\aligned
\zeta_A(s) :&= \prod_{P}\left(1 - q^{-\deg(P) s}\right)^{-1}  \ \ \ \hbox{for $\Re s > 1$} \\
&= \frac{1}{1 - q^{1-s}}. \\
\endaligned
$$
We have

\begin{lem}\label{f}
Let $\zeta_{A_\alpha}(s)$ be the zeta function of $A_\alpha=\Bbb F_{q^\alpha}[t]$. Then
$$
f(s) = \zeta_{A_\alpha}^{\frac{\ell - 1}{\alpha}}(s)\cdot g(s) \ \ \ \hbox{for $\Re s > 1$},
$$
where $g(s)$ is an analytic function for $\Re s \geq 1$ and non-vanishing at $s = 1$.
\end{lem}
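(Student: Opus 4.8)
The plan is to obtain the factorization by matching Euler products, exploiting that $A_\alpha=\mathbb F_{q^\alpha}[t]$ is the integral closure of $A$ in the everywhere-unramified constant field extension $\mathbb F_{q^\alpha}(t)/K$. First I would record the splitting law: a finite prime $P$ of $K$ of degree $d$ has residue field $\mathbb F_{q^d}$, so the primes of $A_\alpha$ lying over $P$ have residue field $\mathbb F_{q^d}\cdot\mathbb F_{q^\alpha}=\mathbb F_{q^{\lcm(d,\alpha)}}$; counting residue degrees in the degree-$\alpha$ extension shows that $P$ splits into exactly $\gcd(d,\alpha)$ primes of $A_\alpha$, each of norm $q^{\alpha d/\gcd(d,\alpha)}$. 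In particular, if $\alpha\mid d$ then $P$ contributes $\alpha$ Euler factors $(1-q^{-ds})^{-1}$ to $\zeta_{A_\alpha}(s)$, each equal to the $P$-factor of $\zeta_A(s)$; and if $\alpha\nmid d$ then $\gcd(d,\alpha)<\alpha$, so $P$ contributes $\gcd(d,\alpha)$ Euler factors whose exponent $\alpha d/\gcd(d,\alpha)$ is $\ge 2d$.

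Splitting the Euler product of $\zeta_{A_\alpha}$ over the primes of $A$ accordingly gives
\[
\zeta_{A_\alpha}(s)=\Bigl(\prod_{\alpha\mid\deg(P)}\bigl(1-q^{-\deg(P)s}\bigr)^{-1}\Bigr)^{\!\alpha}\cdot h(s),\qquad
h(s):=\prod_{\alpha\nmid\deg(P)}\bigl(1-q^{-\frac{\alpha\deg(P)}{\gcd(\alpha,\deg(P))}s}\bigr)^{-\gcd(\alpha,\deg(P))},
\]
where, because every exponent occurring in $h$ is $\ge 2\deg(P)$, the product $h(s)$ converges absolutely — hence is analytic and non-vanishing — on $\Re s>1/2$. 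Since $\alpha\mid \ell-1$, the exponent $w=(\ell-1)/\alpha$ is a positive integer, and raising the displayed identity to the $w$-th power yields $\zeta_{A_\alpha}^{w}(s)=\prod_{\alpha\mid\deg(P)}(1-q^{-\deg(P)s})^{-(\ell-1)}\cdot h(s)^{w}$. Dividing,
\[
g(s):=\frac{f(s)}{\zeta_{A_\alpha}^{w}(s)}=\Bigl[\,\prod_{\alpha\mid\deg(P)}\bigl(1+(\ell-1)q^{-\deg(P)s}\bigr)\bigl(1-q^{-\deg(P)s}\bigr)^{\ell-1}\Bigr]\cdot h(s)^{-w}.
\]

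It then remains to verify the two claimed properties of $g$. The key computation is the polynomial identity $(1+(\ell-1)X)(1-X)^{\ell-1}=1+O(X^2)$, i.e. the linear terms cancel; hence each factor of the bracketed product equals $1+O(q^{-2\deg(P)s})$ uniformly, so the bracketed product converges absolutely and locally uniformly — and is therefore analytic — on $\Re s>1/2$. Together with the analyticity of $h(s)^{-w}$ there, $g$ is analytic on $\Re s>1/2$, which contains $\{\Re s\ge 1\}$. For the non-vanishing, at $s=1$ each factor $(1+(\ell-1)q^{-\deg(P)})(1-q^{-\deg(P)})^{\ell-1}$ is a product of positive reals since $0<q^{-\deg(P)}<1$, so the absolutely convergent product is a positive real, and $h(1)^{-w}\ne 0$; hence $g(1)>0$, in particular $g(1)\ne 0$. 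The only real content is the prime-splitting bookkeeping together with the observation that taking precisely the exponent $w$ is what makes the leftover Euler factors $1+O(q^{-2\deg(P)s})$, so that the ``error'' product extends analytically past the line $\Re s=1$; a different exponent would leave a product convergent only on $\Re s>1$ and the argument would collapse.
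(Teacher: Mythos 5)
Your proposal is correct and follows essentially the same route as the paper: the paper likewise decomposes $\zeta_{A_\alpha}$ according to $\gcd(\alpha,\deg P)$ (its factors $\mathcal{L}_d$ are exactly your splitting-law bookkeeping), isolates the primes with $\alpha\mid\deg P$, and relies on the same cancellation $(1+(\ell-1)X)(1-X)^{\ell-1}=1+O(X^2)$ to push the leftover Euler product past $\Re s=1$. The only difference is presentational — you merge the $d\neq\alpha$ factors into a single $h(s)$ and justify its convergence directly, where the paper keeps them separate and cites an earlier reference.
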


\pf For each $d\mid\alpha$, we consider the infinite product
$$
\mathcal{L}_{d}(s):= \prod_{P:\left(\alpha, \deg(P)\right)=d}\left(1-q^{-\frac{\alpha\deg(P)}{d}s}\right)^{-d} \ \ \ \hbox{for $\Re s > \frac{d}{\alpha}$},
$$
where the product is taking over the finite primes $P$ in $K$ such that $\gcd\left(\alpha, \deg(P)\right)=d$. Then we know that
$$
\zeta_{A_\alpha}(s) = \prod_{d\mid \alpha}\mathcal{L}_{d}(s) = \prod_{\alpha\mid\deg(P)}\left(1-q^{-\deg(P)s}\right)^{-\alpha}\prod_{\substack{d\mid \alpha\\ d\neq\alpha}}\mathcal{L}_{d}(s) \ \ \ \hbox{for $\Re s > 1$}.
$$
Let $w=(\ell-1)/\alpha$, then
$$
f(s) = \zeta_{A_\alpha}^{w}(s)\prod_{\alpha\mid\deg(P)}\left[\left(1 + (\ell - 1)q^{-\deg(P)s}\right)\left(1-q^{-\deg(P)s}\right)^{\ell-1}\right]\prod_{\substack{d\mid \alpha\\ d\neq\alpha}}\mathcal{L}_{d}^{-w}(s) \ \ \ \hbox{for $\Re s > 1$}.
$$
We know that $\mathcal{L}_d^{-1}(s)$ is analytic and non-vanishing at $s=1$ since $\mathcal{L}_d^{-1}(1) \geq \zeta_{A_\alpha}^{-1}(2)>0$ for all $d \neq \alpha$ (cf. \cite[Section 2]{CKY}). On the other hand, one computes
$$
\aligned
h(s)&:=\prod_{\alpha\mid\deg(P)}\left[\left(1 + (\ell - 1)q^{-\deg(P)s}\right)\left(1-q^{-\deg(P)s}\right)^{\ell-1}\right]\\
&=\prod_{\alpha\mid\deg(P)}\left(1+c_2q^{-2\deg(P)s}+\cdots+c_{\ell}q^{-(\ell)\deg(P)s}\right) \ \ \ \hbox{for $\Re s > 1$,}\\
\endaligned
$$
where
$$
c_i = \begin{cases}
(-1)^i\binom{\ell-1}{i}+ (-1)^{i-1} (\ell - 1)\binom{\ell-i}{i-1} \ \ \ \hbox{if $i < \ell$,} \\
(-1)^{\ell - 1}(\ell - 1) \ \ \ \hbox{if $i = \ell$.}
\end{cases}
$$
It is easy to check that $h(s)$ is analytic for $\Re s > 1/2$ and non-vanishing at $s=1$. This completes the proof of the lemma.\qed

Before we prove the main theorem, we recall a function field version of Wiener-Ikehara Tauberian theorem which we will prove in the appendix.

\begin{thm}\label{Tau}
Let $f(u) = \sum_{n \geq 0} b_nu^n$ with $b_n \in \Bbb C$ be convergent in the region $\{u\in\Bbb C : |u| < q^{-a}\}$. Assume that in the domain of convergence $f(u) = g(u)(u - q^{-a})^{-w} + h(u)$ holds, where $w \in \Bbb N$ and $h(u)$, $g(u)$ are analytic functions in $\{u\in \mathbb{C}: |u|< q^{-a+\delta}\}$ for some $\delta>0$,  $g(q^{-a})\not=0$. Then, for any $\epsilon > 0$, we have
$$
b_n = q^{an}Q(n) + O(q^{(a-\delta+\epsilon)n}),
$$
where $Q(X) \in \Bbb C[X]$ is the polynomial of degree $w-1$ given by
$$Q(X):=\displaystyle\sum_{j=0}^{w-2}\frac{g^{(j)}(q^{-a})(-1)^{w-j-1}}{j!(w-j-1)!}\left[\displaystyle\prod_{\ell=1}^{w-j-1}(X+\ell)\right]\cdot q^{a(w-j)}+\frac{g^{(w-1)}(q^{-a})}{(w-1)!}q^a.$$
In particular, if we write $Q(X)=\sum_{i=1}^{w-1}c_iX^{w-i}$, then we have
$$
c_1=(-1)^{-w}\frac{g(q^{-a})q^{aw}}{\Gamma(w)} \ \ \ \hbox{and} \ \ \ c_2=\frac{(-1)^{-w}}{\Gamma(w-1)}\cdot q^{aw} \cdot\left[g(q^{-a})(\frac{w}{2})-\frac{g'(q^{-a})}{q^a}\right].
$$
\end{thm}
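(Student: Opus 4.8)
The plan is to recover $b_n$ from the generating function $f$ by Cauchy's integral formula, then to push the circle of integration outward past the pole at $u=q^{-a}$; the residue picked up at that pole produces the main term $q^{an}Q(n)$, while the enlarged contour accounts for the error term. Concretely, I would fix a radius $r_0$ with $0<r_0<q^{-a}$ and write
$$
b_n=\frac{1}{2\pi i}\oint_{|u|=r_0}\frac{f(u)}{u^{n+1}}\,du,
$$
which is legitimate since $\sum_{n\ge0}b_nu^n$ converges on $|u|<q^{-a}$. By hypothesis $f$ continues to a function meromorphic on the larger disc $\{|u|<q^{-a+\delta}\}$ whose only singularity there is a pole of order exactly $w$ at $u=q^{-a}$, because $h$ is analytic on that disc and $g(q^{-a})\neq0$. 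Choosing any $r_1$ with $q^{-a}<r_1<q^{-a+\delta}$, the circle $|u|=r_1$ avoids the pole, so the residue theorem applied to the annulus $r_0<|u|<r_1$ gives
$$
b_n=\frac{1}{2\pi i}\oint_{|u|=r_1}\frac{f(u)}{u^{n+1}}\,du-\mathrm{Res}_{\,u=q^{-a}}\frac{f(u)}{u^{n+1}}.
$$

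The next step is the residue computation. Only the summand $g(u)(u-q^{-a})^{-w}$ of $f$ is singular at $u=q^{-a}$, so
$$
\mathrm{Res}_{\,u=q^{-a}}\frac{f(u)}{u^{n+1}}=\frac{1}{(w-1)!}\left.\frac{d^{\,w-1}}{du^{\,w-1}}\!\left(\frac{g(u)}{u^{n+1}}\right)\right|_{u=q^{-a}}.
$$
Expanding by the Leibniz rule, the factor $u^{-n-1}$ contributes its $k$-th derivative $(-1)^{k}(n+1)(n+2)\cdots(n+k)\,u^{-n-1-k}$, so that, evaluated at $u=q^{-a}$, every term carries a factor $q^{an}$ together with a fixed power of $q^{a}$ and a product $\prod_{\ell=1}^{k}(n+\ell)$. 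Grouping the terms by the number $j$ of derivatives that fall on $g$ (so $k=w-1-j$) and simplifying with $\binom{w-1}{j}/(w-1)!=1/(j!(w-1-j)!)$, the residue becomes $q^{an}$ times a polynomial of degree $w-1$ in $n$; matching the constants reproduces (up to the overall sign carried from the contour shift) exactly the polynomial $Q(X)$ of the statement. The leading coefficient $c_1$ comes from the single term $j=0$, since the leading coefficient of $\prod_{\ell=1}^{w-1}(X+\ell)$ equals $1$; and $c_2$ is obtained by combining the $X^{w-2}$-coefficient of that same product, namely $\sum_{\ell=1}^{w-1}\ell=\tfrac{w(w-1)}{2}$, with the leading contribution of the term $j=1$, which carries $g'(q^{-a})$; this yields the displayed formulas for $c_1$ and $c_2$.

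Finally I would bound the leftover integral by the trivial estimate
$$
\left|\frac{1}{2\pi i}\oint_{|u|=r_1}\frac{f(u)}{u^{n+1}}\,du\right|\le\frac{1}{r_1^{\,n}}\max_{|u|=r_1}|f(u)|,
$$
where the maximum is a finite constant independent of $n$ because $f$ is continuous on the circle $|u|=r_1$, which misses the pole. Taking $r_1=q^{-(a-\delta+\epsilon)}$ with $0<\epsilon<\delta$ arbitrarily small yields the error term $O(q^{(a-\delta+\epsilon)n})$; combined with the residue computation, this completes the proof.

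The step I expect to be genuinely fiddly is the residue bookkeeping: keeping precise track of the signs, the binomial coefficients, and the falling factorials $(-1)^{k}(n+1)\cdots(n+k)$ so that the closed form lands exactly on $Q(X)$, and on the stated expressions for $c_1$ and $c_2$ in particular. The analytic ingredients — Cauchy's formula, the contour shift, and the crude bound on the enlarged circle — are entirely routine.
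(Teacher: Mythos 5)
Your argument is essentially the paper's own proof (Step 1 of the appendix): recover $b_n$ by Cauchy's formula, shift the contour past the order-$w$ pole at $u=q^{-a}$ so that the residue there yields the main term $q^{an}Q(n)$, and bound the integral over the enlarged circle trivially to get $O(q^{(a-\delta+\epsilon)n})$. The only delicate point is the overall minus sign picked up in the contour shift, which you correctly flag; carry it through explicitly, since the paper's own displayed $Q(X)$ and its stated leading coefficient $c_1=(-1)^{-w}g(q^{-a})q^{aw}/\Gamma(w)$ already differ by exactly that sign.
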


We are now ready to prove Theorem \ref{prime}. Let $w=(\ell-1)/\alpha$ and define the region $B := \{s\in\Bbb C : -\pi /\log q^{\alpha} \leq \Im s < \pi /\log q^{\alpha}\}$. From Lemma \ref{f}, we have proved that $f(s) = \zeta_{A_\alpha}^{\frac{\ell - 1}{\alpha}}(s)\cdot g(s)$ for $\Re s > 1$,
where
$$
g(s)=\prod_{\alpha\mid\deg(P)}\left[\left(1 + (\ell - 1)q^{-\deg(P)s}\right)\left(1-q^{-\deg(P)s}\right)^{\ell-1}\right]\prod_{\substack{d\mid \alpha\\ d\neq\alpha}}\mathcal{L}_{d}^{-w}(s)
$$
is an analytic function for $\Re s \geq 1/2$ and non-vanishing at $s = 1$. Note that $f(s)$ converges absolutely for $\Re s > 1$ and is analytic in the region $\{s \in B : \Re s = 1\}$ except for a pole of order $w$ at $s = 1$. The last statement implies that
$$
r(K, \ell) := \lim_{s\rightarrow 1} (s-1)^wf(s)=\frac{g(1)}{\alpha\log q}> 0.
$$

Replacing the variable $s$ by $-\frac{\log u}{\log q^\alpha}$, define the function $Z_f(u) := f\left(-\frac{\log u}{\log q^\alpha}\right)=\sum b_{\alpha n}u^n$. Then we have $Z_f(u) = Z_g(u)\left(u -q^{-\alpha}\right)^{-w}$ where $Z_g(u)$ is an analytic function in the region $\{u\in\Bbb C : |u|< q^{-\alpha/2}\}$, and
$$
\aligned
Z_g\left(q^{-\alpha}\right) &= \lim_{u \rightarrow q^{-\alpha}} \left(u - q^{-\alpha}\right)^w Z_f(u) \\
&= \lim_{s \rightarrow 1} \frac{\left(q^{-\alpha s} - q^{-\alpha}\right)^w}{\left(s-1\right)^w}\left(s-1\right)^wf(s) \\
&= \left(-\frac{\log q^\alpha}{q^\alpha}\right)^wr(K,\ell). \\
\endaligned
$$
Applying Theorem \ref{Tau}, for any $\epsilon > 0$, as $n \rightarrow \infty$,
\begin{equation}\label{b_n}
b_{\alpha n}= q^{\alpha n}Q(n) + O(q^{(\frac{\alpha}{2}+\epsilon) n}), \\
\end{equation}
where $Q(X) \in \Bbb R[X]$ is a polynomial of degree $w-1$ given by
$$Q(X)=\displaystyle\sum_{j=0}^{w-2}\frac{Z_g^{(j)}(q^{-a})(-1)^{w-j-1}}{j!(w-j-1)!}\left[\displaystyle\prod_{\ell=1}^{w-j-1}(X+\ell)\right]\cdot q^{a(w-j)}+\frac{Z_g^{(w-1)}(q^{-a})}{(w-1)!}q^a.$$
Note that the leading coefficient $c_1$ of $Q(X)$ is given by
\begin{equation}\label{c1}
c_1=\frac{r(K,\ell)(\log q^\alpha)^w}{(w-1)!}.
\end{equation}
Combining (\ref{b_n}), (\ref{c1}) and $a_{\ell}(n) = 2 b_{\alpha n}/(\ell-1)$, we complete the proof of Theorem \ref{prime}.

\section{Counting cyclic cubic extensions}

In this section, we derive an explicit formula of $r(K, \ell)$ for the case of cyclic cubic extensions. Moreover, we give the second order term for the number $a_{3}(n)$ when $q \equiv 1 \pmod 3$. In Lemma \ref{f}, we have proved that $f(s)$ can be written as a product of the zeta function of $A_\alpha$ and an analytic function $g(s)$. We will write down $g(s)$ explicitly and compute its derivative to obtain Theorem \ref{cubic}. In the general case, it is also possible to compute $r(K, \ell)$ and small order terms for $a_{\ell}(n)$ using the same method.

In the case of $q \equiv 1 \pmod 3$, we consider the partial Euler product
$$
f(s) := \prod_{P} \left(1 + 2q^{-\deg(P) s} \right) \ \ \ \hbox{for $\Re s > 1$}.
$$
Write $f(s) = \sum b_{n} q^{-ns}$, then $a_{3}(n) = b_{n}$ for all $n > 0$. Recall that $\zeta_A(s)$ is the zeta function of $A$ which is defined by
$$
\zeta_A(s) := \prod_{P}\left(1 - q^{-\deg(P) s}\right)^{-1} \ \ \ \hbox{for $\Re s > 1$}.
$$
Then one computes that $f(s) = \zeta_A^2(s)g(s)$ where
$$
g(s) = \prod_{P}\left(1 - 3q^{-2\deg(P) s} + 2q^{-3\deg(P) s}\right) \ \ \ \hbox{for $\Re s > 1/2$}
$$
is analytic for $\Re s > 1/2$ and non-vanishing at $s=1$.

Replacing the variable $s$ by $-\frac{\log u}{\log q}$, define the function $Z_f(u) := f\left(-\frac{\log u}{\log q}\right)=\sum b_nu^n$. Then $Z_f(u) = Z_g(u)\left(u - q^{-1}\right)^{-2}$ where
$$
Z_g(u) = q^{-2}\prod_{P} \left(1-3u^{2\deg(P)} + 2u^{3\deg(P)}\right)
$$
is an analytic function in the region $\{u\in\Bbb C : |u|\leq q^{-1}\}$. Applying Theorem \ref{Tau} to $Z_f(u)$ and $a_{3}(n) = b_n$, there is a constant $\delta < 1$ such that
$$
a_{3}(n) = q^{2n}\left(q^2Z_g(q^-1)n+q^2Z_g(q^{-1})-qZ_g^{\prime}(q^{-1})\right)+O\left(q^{2n\delta}\right) \ \ \ \hbox{as $n \rightarrow \infty$}.
$$

In the case of $q\equiv 2 \pmod 3$, we consider the partial Euler product
$$
f(s) := \prod_{2\mid\deg(P)} \left(1 + 2q^{-\deg(P) s} \right) \ \ \ \hbox{for $\Re s > 1$}.
$$
We now write $f(s)=\sum b_{2n}q^{-2ns}$, then $a_{3}(n)=b_{2n}$ for all $n > 0$. Let $\zeta_{A_2}(s)$ be the zeta function of $A_2=\Bbb F_{q^2}[t]$. One has
$$
\zeta_{A_2}(s) = \prod_{\deg(P) : even} \left(1 - q^{-\deg(P) s}\right)^{-2}\prod_{\deg(Q) : odd}\left(1 - q^{-2\deg(Q) s}\right)^{-1}
$$
where $P$ are the primes in $A$ of odd degree and $Q$ are the primes in $A$ of odd degree. Then we have the identity
$$
f(s) = \zeta_{A_2}(s)\zeta_{A}^{-1}(2s) \prod_{\deg(P) : even}\frac{\left(q^{\deg(P) s} + 2\right)\left(q^{\deg(P)s} - 1\right)}{q^{\deg(P)s}(q^{\deg(P) s}+1)} \ \ \ \hbox{for $\Re s > 1$}.
$$

We now replace the variable $s$ by $-\frac{\log u}{\log q^2}$ and define the function $Z_f(u) := f\left(-\frac{\log u}{\log q^2}\right)=\sum b_{2n}u^n$. Then $Z_f(u) = Z_g(u)\left(u - q^{-2}\right)^{-1}$ where
$$
Z_g(u) = -q^{-2}(1-qu)\prod_{\deg(P) : even}\frac{\left(u^{\frac{-\deg(P)s}{2}}+2\right)\left(u^{\frac{-\deg(P)s}{2}}-1\right)}{u^{\frac{-\deg(P)s}{2}}\left(u^{\frac{-\deg(P)s}{2}} - 1\right)}
$$
is an analytic function in the region $\{u\in\Bbb C : |u|\leq q^{-2}\}$. From Theorem \ref{Tau} and $a_{3}(n) = b_{2n}$, there is a $\delta < 1$ such that
$$
a_{3}(n) = \frac{1}{\zeta_{A}(2)}\left[\prod\limits_{\deg(P) : even}\frac{\left(q^{\deg(P)} + 2\right)\left(q^{\deg(P)} - 1\right)}{q^{\deg(P)}\left(q^{\deg(P)} + 1\right)}\right]q^{2n} + O\left(q^{2n\delta}\right) \ \ \ \hbox{as $n\rightarrow \infty$}.
$$
This completes the proof of Theorem \ref{cubic}.

\section*{Appendix}

In the appendix, we derive a function field version of Wiener-Ikehara Tauberian Theorem with a main term much sharper then the standard one \cite[Corollary of Theorem 17.4]{Ros}. Before we prove the main theorem, we first record some lemmas.
\begin{lem}
Let $\Gamma(t)$ be  Gamma function. Then we have
$$\frac{n! n^t}{\displaystyle\prod_{i=0}^n (t+i)}=\Gamma(t)\left[1-\left(\frac{t^2+t}{2}\right)\frac{1}{n}+O(\frac{1}{n^2})\right] \mbox{ as }n\rightarrow \infty$$
for all complex numbers $t$, except the non-positive integers.
\end{lem}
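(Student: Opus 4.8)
The plan is to extract the rate of convergence in the classical Gauss limit $\Gamma(t)=\lim_{n\to\infty}a_n(t)$, where $a_n(t):=\dfrac{n!\,n^t}{\prod_{i=0}^n(t+i)}$, by a telescoping argument. Fix $t\in\C$ not a non-positive integer, so that each $a_n(t)$ is defined and nonzero and $a_n(t)\to\Gamma(t)\neq 0$; one then fixes a branch of the logarithm consistently along the sequence (legitimate since the $a_n(t)$ eventually lie in a small disc about $\Gamma(t)$). The first step is the exact identity
$$
\frac{a_n(t)}{a_{n+1}(t)}=\frac{t+n+1}{n+1}\left(\frac{n}{n+1}\right)^{t}=\left(1+\frac{t}{n+1}\right)\left(1-\frac{1}{n+1}\right)^{t}.
$$

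Next I would take logarithms and Taylor-expand both factors in powers of $(n+1)^{-1}$. The linear terms cancel, and one is left with
$$
\log a_n(t)-\log a_{n+1}(t)=-\frac{t^2+t}{2(n+1)^2}+O\!\left(\frac{1}{n^3}\right),
$$
the implied constant depending only on $t$ (here one uses $|\log(1+z)-z+\tfrac12 z^2|\ll_t |z|^3$ for $|z|\le\tfrac12$). Summing this telescoping identity from $n$ to $\infty$ and invoking $\log a_m(t)\to\log\Gamma(t)$ gives
$$
\log a_n(t)-\log\Gamma(t)=-\frac{t^2+t}{2}\sum_{m=n+1}^{\infty}\frac{1}{m^2}+O\!\left(\frac{1}{n^2}\right)=-\frac{t^2+t}{2n}+O\!\left(\frac{1}{n^2}\right),
$$
using the elementary tail bound $\sum_{m>n}m^{-2}=n^{-1}+O(n^{-2})$. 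Exponentiating and expanding $\exp$ to first order yields
$$
a_n(t)=\Gamma(t)\exp\!\left(-\frac{t^2+t}{2n}+O(n^{-2})\right)=\Gamma(t)\left[1-\left(\frac{t^2+t}{2}\right)\frac{1}{n}+O\!\left(\frac{1}{n^2}\right)\right],
$$
which is the assertion.

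I expect no serious obstacle; the content is simply a second-order refinement of the Gauss limit, and the coefficient $\tfrac12(t^2+t)$ is forced by the cancellation above. The only care needed is bookkeeping: checking the $O(1/n^3)$ in the logarithmic difference is summable with the stated uniformity (immediate for fixed $t$), and that the branch of $\log$ can be chosen consistently. If one prefers to avoid telescoping, an equivalent route writes $a_n(t)^{-1}=t\,n^{-t}\prod_{k=1}^n(1+t/k)$, substitutes the harmonic-number expansion $H_n=\log n+\gamma+\tfrac{1}{2n}+O(n^{-2})$ to rewrite $n^{-t}=e^{t\gamma}e^{t/2n}\prod_{k=1}^n e^{-t/k}\cdot(1+O(n^{-2}))$, identifies the partial Weierstrass product $\prod_{k=1}^n(1+t/k)e^{-t/k}$, and estimates its tail $\prod_{k>n}(1+t/k)e^{-t/k}=1-\tfrac{t^2}{2n}+O(n^{-2})$; reassembling and inverting gives the same formula.
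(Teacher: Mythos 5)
Your argument is correct. The exact ratio identity, the cancellation of the linear terms in the logarithmic expansion leaving $-\tfrac{t^2+t}{2(n+1)^2}+O(n^{-3})$, the telescoping against the Gauss limit $a_m(t)\to\Gamma(t)$, and the tail estimate $\sum_{m>n}m^{-2}=n^{-1}+O(n^{-2})$ all check out, and you correctly flag the two points needing care (consistency of the branch of $\log$ along the sequence, and summability/uniformity of the error for fixed $t$). The paper disposes of the lemma with the single line ``just use Stirling's formula,'' i.e.\ it implicitly writes $\prod_{i=0}^{n}(t+i)=\Gamma(t+n+1)/\Gamma(t)$ and invokes the two-term asymptotic for the ratio $\Gamma(n+1)/\Gamma(n+t+1)\sim (n+1)^{-t}\bigl(1-\tfrac{t(t-1)}{2(n+1)}+O(n^{-2})\bigr)$ together with $n^{t}(n+1)^{-t}=1-\tfrac{t}{n}+O(n^{-2})$; multiplying these reproduces the coefficient $\tfrac{t^2+t}{2}$. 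So your route is genuinely different: it is more elementary and self-contained, deriving the second-order coefficient directly from the Gauss product via telescoping, at the cost of a little bookkeeping; the Stirling route is shorter if one already has the refined expansion of $\Gamma$-ratios at hand, and it generalizes more readily to full asymptotic expansions in powers of $1/n$. Either proof is acceptable here.
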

\begin{proof}
Just use Stirling's formula.
\end{proof}

\begin{lem}\label{lem-T-1}
Let  $\delta, a>0$ be fixed real numbers and $q\geq2$. Assume  $w<1$  and $w  \in \mathbb{R}-\mathbb{Z}.$  Then we have
$$\displaystyle\int_{q^{-a}}^{q^{-a(1-\delta)}}\frac{du}{u^{n+1}\cdot (u-q^{-a})^w}=\frac{2\pi i\cdot e^{-w\pi i}}{1-e^{-2\pi iw}}\cdot \frac{1}{\Gamma(w)}\cdot \left[ 1+\left(\frac{w^2-w}{2}\right)\frac{1}{n}+O(\frac{1}{n^2})\right]q^{a(n+w)}n^{w-1} \ \ \ \hbox{as $n\rightarrow \infty $.}
$$
Here  $(u-q^{-a})^w:=e^{w \log (u-q^{-a})}$, analytic branch of $\log u$ is  fixed with  $0<\mbox{\rm Arg }u<2\pi$. Thus $\log (u-q^{-a})$ is an analytic function defined on $\mathbb{C}-[q^{-a},\infty)$.
\end{lem}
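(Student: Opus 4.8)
The plan is to reduce the integral to a complete Beta integral plus an exponentially small remainder, evaluate that Beta integral in closed form, and then read off the asymptotics using the previous lemma together with the reflection formula for $\Gamma$. First I would substitute $u=q^{-a}(1+t)$. Then $du=q^{-a}\,dt$, $u^{n+1}=q^{-a(n+1)}(1+t)^{n+1}$, $u-q^{-a}=q^{-a}t$, and the endpoints $u=q^{-a}$, $u=q^{-a(1-\delta)}$ correspond to $t=0$, $t=c-1$ with $c:=q^{a\delta}>1$ (here $q\geq 2$ and $a\delta>0$ are used). Interpreting the segment, which lies on the branch cut $[q^{-a},\infty)$, as the boundary value from the upper half-plane, we have $\Arg(u-q^{-a})\to 0^{+}$, so $(u-q^{-a})^{w}=q^{-aw}t^{w}$ with $t^{w}>0$ the ordinary real power. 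Hence
\[
\int_{q^{-a}}^{q^{-a(1-\delta)}}\frac{du}{u^{n+1}(u-q^{-a})^{w}}=q^{a(n+w)}\int_{0}^{c-1}\frac{t^{-w}}{(1+t)^{n+1}}\,dt .
\]

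Next I would complete the integral to $[0,\infty)$ (legitimate once $n$ is large enough that $n+w>0$). The tail $\int_{c-1}^{\infty}$ is $O(c^{-n})$: on $t\geq c-1$ one bounds $(1+t)^{-(n+1)}\leq c^{-(n+1)/2}(1+t)^{-(n+1)/2}$, and the remaining integral stays bounded; after multiplication by $q^{a(n+w)}$ this is exponentially small and is absorbed into the error term of the statement. For the completed integral, the classical identity $\int_{0}^{\infty}t^{x-1}(1+t)^{-x-y}\,dt=B(x,y)$ applied with $x=1-w$ (valid because $w<1$) and $y=n+w$ gives
\[
\int_{0}^{\infty}\frac{t^{-w}}{(1+t)^{n+1}}\,dt=B(1-w,\,n+w)=\Gamma(1-w)\,\frac{\Gamma(n+w)}{\Gamma(n+1)} .
\]

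Then I would invoke the preceding lemma with parameter $w-1$ (a non-integer, hence not a non-positive integer), which yields
\[
\frac{\Gamma(n+w)}{\Gamma(n+1)}=n^{w-1}\Bigl[1+\frac{(w-1)w}{2}\cdot\frac1n+O\!\Bigl(\frac1{n^{2}}\Bigr)\Bigr]=n^{w-1}\Bigl[1+\frac{w^{2}-w}{2}\cdot\frac1n+O\!\Bigl(\frac1{n^{2}}\Bigr)\Bigr].
\]
Finally the reflection formula $\Gamma(w)\Gamma(1-w)=\pi/\sin(\pi w)$ together with the elementary identity $\pi/\sin(\pi w)=2\pi i\,e^{-\pi i w}/(1-e^{-2\pi i w})$ rewrites $\Gamma(1-w)=\frac{1}{\Gamma(w)}\cdot\frac{2\pi i\,e^{-\pi i w}}{1-e^{-2\pi i w}}$. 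Assembling the three displays gives the assertion.

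The only genuinely delicate point is the branch bookkeeping for $(u-q^{-a})^{w}$ on the segment $[q^{-a},q^{-a(1-\delta)}]$, which sits on the cut of the prescribed logarithm: one must check it is taken as the limit from the side on which $(u-q^{-a})^{w}$ is the positive real power (the other side would multiply the answer by $e^{2\pi i w}$), and the normalization $0<\Arg u<2\pi$ is exactly what forces this. It is also why the constant is most naturally written as $2\pi i\,e^{-\pi i w}/((1-e^{-2\pi i w})\Gamma(w))$ rather than simply as $\Gamma(1-w)$, anticipating its combination with the companion integral along the other side of the cut in the proof of Theorem~\ref{Tau}. Everything else — the Beta-integral evaluation, the exponentially small tail, and the second-order Stirling expansion carried over from the previous lemma — is routine.
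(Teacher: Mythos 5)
Your argument is correct, and its skeleton coincides with the paper's: the same affine substitution reduces the integral to $q^{a(n+w)}\int_0^{c-1}t^{-w}(1+t)^{-(n+1)}\,dt$, the integral is completed to $[0,\infty)$ at the cost of an exponentially small tail, and the first lemma of the appendix (Stirling) supplies the $1+\tfrac{w^2-w}{2n}+O(n^{-2})$ expansion. The one step you do differently is the evaluation of the completed integral $I_1=\int_0^\infty t^{-w}(1+t)^{-(n+1)}\,dt$: the paper computes it by a keyhole contour as $\frac{2\pi i}{1-e^{-2\pi i w}}\,\mathrm{Res}\bigl(u^{-w}(u+1)^{-(n+1)},-1\bigr)$ and then expands the resulting Pochhammer product, whereas you identify it as the Beta integral $B(1-w,n+w)=\Gamma(1-w)\Gamma(n+w)/\Gamma(n+1)$ and convert $\Gamma(1-w)$ into the stated constant via the reflection formula $\Gamma(w)\Gamma(1-w)=\pi/\sin(\pi w)=2\pi i e^{-\pi i w}/\bigl((1-e^{-2\pi i w})\Gamma(w)\bigr)\cdot\Gamma(w)$. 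The two evaluations are equivalent, and your route is the more elementary one (no contour integration), at the price of quoting the classical Beta identity; the paper's residue form has the advantage of matching, term by term, the contour pieces that reappear in Step 2 of the proof of the Tauberian theorem. Your explicit treatment of the branch of $(u-q^{-a})^w$ on the cut (boundary value from the upper half-plane, so that the integrand is the positive real power) is a point the paper leaves implicit, and your tail bound $O(c^{-n/2})$, though weaker than the paper's $O(c^{-n}/n)$, is still exponentially small and amply sufficient.
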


\begin{proof}
Substituting $q^{-a}u$ for $u$, the integral changes to
$$
\aligned
\displaystyle q^{a(n+w)}\cdot \displaystyle \int_1^{q^{a\delta}} \frac{du}{u^{n+1}\cdot (u-1)^w}
&=q^{a(n+w)}\cdot \left[ \displaystyle \int_0^{\infty} \frac{du}{(u+1)^{n+1}\cdot u^w}-\displaystyle \int_{q^{a\delta}-1}^\infty \frac{du}{(u+1)^{n+1}\cdot u^w}  \right] \\
&:= q ^{a(n+w)}(I_1+I_2).\\
\endaligned
$$
The integral $I_2$  simply equals  to $O\left( \frac{q^{-a\delta n}}{n}\right).$ The integral $I_1$ can be checked to be
$$\frac{2\pi i}{1-e^{-2\pi i w}}\cdot \mbox{Res}\left( \frac{u^{-w}}{(u+1)^{n+1}},-1\right).$$
Furthermore, we have
\begin{align}
\mbox{Res}\left( \frac{u^{-w}}{(u+1)^{n+1}},-1\right)\notag
&=\frac{1}{n!\cdot n^w}\left( \displaystyle\prod_{i=0}^{n-1}(-w-i)(w+n)\cdot(-1)^{-w-n}\right)\cdot \frac{n^w}{n+w} \notag\\
&=\frac{e^{-\pi iw}}{\Gamma\left( w\right)}\left[ 1+\left(\frac{w^2-w}{2}\right)\frac{1}{n}+O(\frac{1}{n^2})\right]n^{w-1}.\notag
\end{align}
This gives the desired identity.

\end{proof}

Let $q\geq 2$, our Tauberian Theorem is

\begin{thm}\label{thm-T}
Let $f(u):=\displaystyle\sum_{n\geq 0} a_n u^n$ with the  numbers $a_n\in \mathbb{C}$ for all $n$, be convergent in $$\{u\in \mathbb{C}: |u|<q^{-a}\}$$ for a fixed real number $a>0$. Assume that in the above domain  $$f(u)=g(u)(u-q^{-a})^{-w}+h(u)$$ holds, where $w>0$ and $h(u)$, $g(u)$ are analytic functions in $\{u\in \mathbb{C}: |u|< q^{-a+\delta}\}$ for some $\delta>0$. Then
$$
a_n \sim (-1)^{-w}\frac{g(q^{-a})q^{aw}}{\Gamma(w)}\cdot q^{an}n^{w-1} \ \ \ \hbox{as $n \rightarrow \infty$}.
$$
Moreover, if $w$ is a positive integer, then, for any $\epsilon > 0$, one has
\begin{equation}\label{17.4}
a_n = Q(n)\cdot q^{an} + O(q^{(a-\delta+\epsilon)n}), \mbox{ as }n\rightarrow \infty,
\end{equation}
where $Q(X) \in \Bbb C[X]$ is the polynomial of degree $w-1$ given by
$$Q(X):=\displaystyle\sum_{j=0}^{w-2}\frac{g^{(j)}(q^{-a})(-1)^{w-j-1}}{j!(w-j-1)!}\left[\displaystyle\prod_{\ell=1}^{w-j-1}(X+\ell)\right]\cdot q^{a(w-j)}+\frac{g^{(w-1)}(q^{-a})}{(w-1)!}q^a.$$
In particular, if we write $Q(X)=\sum_{i=1}^{w-1}c_iX^{w-i}$, then we have
$$
c_1=(-1)^{-w}\frac{g(q^{-a})q^{aw}}{\Gamma(w)} \ \ \ \hbox{and} \ \ \ c_2=\frac{(-1)^{-w}}{\Gamma(w-1)}\cdot q^{aw} \cdot\left[g(q^{-a})(\frac{w}{2})-\frac{g'(q^{-a})}{q^a}\right].
$$
When $0 < w < 1$, we also have
$$
a_n = c_1q^{an}n^{w-1} + c_2q^{an}n^{w-2} +O\left(q^{an}n^{w-3}\right) \ \ \ \hbox{as $n \rightarrow \infty$.}
$$
\end{thm}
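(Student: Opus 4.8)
The plan is to recover $a_{n}$ from Cauchy's coefficient formula and then shift the circle of integration across the singularity at $u=q^{-a}$, in the style of singularity analysis. Fix $0<\rho<q^{-a}<\rho'<q^{-a+\delta}$. Starting from $a_{n}=\frac{1}{2\pi i}\oint_{|u|=\rho}f(u)u^{-n-1}\,du$, I would first discard $h$: being analytic on $|u|<q^{-a+\delta}$, its coefficients estimated on $|u|=\rho'$ contribute $O(\rho'^{-n})=O(q^{(a-\delta+\epsilon)n})$ once $\rho'$ is chosen close enough to $q^{-a+\delta}$, and the same bound holds for the piece of the shifted contour that stays on $|u|=\rho'$. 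Thus, up to $O(q^{(a-\delta+\epsilon)n})$, the coefficient $a_{n}$ equals the contribution of the contour shift localized at $u=q^{-a}$, and it remains to evaluate that for $g(u)(u-q^{-a})^{-w}$.

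When $w$ is a positive integer, $u=q^{-a}$ is a pole of order $w$ and the localized contribution is $-\operatorname{Res}_{u=q^{-a}}\frac{g(u)}{u^{n+1}(u-q^{-a})^{w}}=-\frac{1}{(w-1)!}\frac{d^{\,w-1}}{du^{\,w-1}}\big(g(u)u^{-n-1}\big)\big|_{u=q^{-a}}$. Expanding this by the Leibniz rule and using $\frac{d^{\,k}}{du^{\,k}}u^{-n-1}=(-1)^{k}\big(\prod_{\ell=1}^{k}(n+\ell)\big)u^{-n-1-k}$, one obtains $q^{an}$ times an explicit polynomial in $n$ of degree $w-1$; grouping the terms according to the order $j$ of the derivative landing on $g$ reproduces the polynomial $Q$, and reading off the coefficients of $n^{w-1}$ and $n^{w-2}$ (using $\prod_{\ell=1}^{k}(n+\ell)=n^{k}+\tfrac{k(k+1)}{2}n^{k-1}+\cdots$) gives the asserted $c_{1}$ and $c_{2}$. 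This proves $(\ref{17.4})$, hence also the $\sim$ statement when $w\in\mathbb{Z}$.

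When $w\notin\mathbb{Z}$, $u=q^{-a}$ is an algebraic branch point; I would put the slit along $[q^{-a},\rho']$ with the branch of $(u-q^{-a})^{-w}$ fixed as in Lemma \ref{lem-T-1}, so that shifting the circle to $|u|=\rho'$ turns the localized contribution into a Hankel-type loop around the slit. The values of $(u-q^{-a})^{-w}$ on the two edges of the slit differ by the factor $1-e^{-2\pi iw}$, so the loop reduces --- up to an explicit branch constant and a small-circle term negligible after the reductions below --- to $\int_{q^{-a}}^{\rho'}\frac{g(u)}{u^{n+1}(u-q^{-a})^{w}}\,du$, with $(u-q^{-a})^{w}$ now real positive. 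Taylor-expanding $g$ at $q^{-a}$ to high enough order, the summands $\frac{g^{(j)}(q^{-a})}{j!}(u-q^{-a})^{j}$ contribute integrals $\int_{q^{-a}}^{\rho'}\frac{du}{u^{n+1}(u-q^{-a})^{w-j}}$, each governed by Lemma \ref{lem-T-1} when the exponent $w-j$ is $<1$ and, with no restriction on the exponent, by the exact value $(-q^{-a})^{\,j-w}\binom{n+w-j-1}{n}q^{an}$ of the $u^{n}$-coefficient of $(u-q^{-a})^{\,j-w}$ together with Stirling's formula; the Taylor remainder has a positive exponent and falls into the error term. The dominant $j=0$ summand, after the branch constants collapse to $(-1)^{-w}/\Gamma(w)$, yields $a_{n}\sim(-1)^{-w}\frac{g(q^{-a})q^{aw}}{\Gamma(w)}q^{an}n^{w-1}$. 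For $0<w<1$ only $j=0$ and the first correction matter: writing $g(u)-g(q^{-a})=(u-q^{-a})g_{1}(u)$, the leftover $g_{1}(u)(u-q^{-a})^{1-w}$ is of the same shape with $w$ replaced by $w-1\in(-1,0)$ and is handled by one further application of Lemma \ref{lem-T-1}; combining the $1/n$-correction of Lemma \ref{lem-T-1} on the $j=0$ term with the leading term coming from $g_{1}$ produces the $n^{w-2}$ term with coefficient $c_{2}$, the remainder being $O(q^{an}n^{w-3})$.

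The integer case is essentially mechanical once the contour shift is in place; the real work is the non-integer case, and there two points need care. First, the error bookkeeping: one must check that the part of the real integral away from $u=q^{-a}$ (exponentially negligible), the Taylor remainder $(u-q^{-a})^{\lceil w\rceil-w}\widetilde g(u)$, and the convolutions of the pure-power pieces against $g$ (resp. $g_{1}$) are genuinely of the claimed orders, and for $0<w<1$ that the scales $n^{w-2}$ and $n^{w-3}$ stay separated. Second, pinning down the branch so that the constant emerging from the Hankel loop and the constant $\frac{2\pi i\,e^{-\pi iw}}{1-e^{-2\pi iw}}\cdot\frac{1}{\Gamma(w)}$ of Lemma \ref{lem-T-1} combine to $\frac{(-1)^{-w}}{\Gamma(w)}$ under the convention $(-1)^{-w}=e^{-\pi iw}$. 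Neither is conceptually deep, but the first is where the estimates must actually be done.
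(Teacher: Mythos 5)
Your argument for integer $w$ is essentially the paper's Step 1 (shift Cauchy's circle past $u=q^{-a}$ and evaluate the residue by the Leibniz rule), but for non-integer $w$ you take a genuinely different route. The paper splits that case in two: for $0<w<1$ it runs the keyhole contour together with Lemma \ref{lem-T-1}, exactly as you do, but for non-integer $w>1$ it does \emph{not} expand $g$ at the singularity; instead it inducts on $\lfloor w\rfloor$ by passing to $\hat f(u)=(u-q^{-a})f(u)$, whose coefficients $b_n=-q^{-a}a_n+a_{n-1}$ obey the asymptotic with exponent $w-1$, and then recovers $a_n=-\sum_{k=0}^{n}q^{a(k+1)}b_{n-k}$ and evaluates $\sum_{k\le n}k^{w-2}$. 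Your direct singularity-analysis route (Taylor-expand $g$ at $q^{-a}$ and transfer each $(u-q^{-a})^{j-w}$ term via the exact binomial coefficient and Stirling) is more uniform and would give a full asymptotic expansion, but it concentrates the difficulty in the point you yourself flag: the Taylor remainder $(u-q^{-a})^{m-w}\widetilde g(u)$ is \emph{not} exponentially negligible --- its coefficients are only $O\bigl(q^{an}n^{w-m-1}\bigr)$ --- so ``falls into the error term'' genuinely requires an $O$-transfer estimate, i.e.\ a rerun of the Hankel-loop bound of Lemma \ref{lem-T-1} with the larger exponent; the paper's induction avoids this entirely, at the cost of obtaining only the leading term $\sim$ for non-integer $w>1$, which is all the theorem asserts there. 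One further remark: your localized contribution is $-\operatorname{Res}$, and that sign is the correct one, consistent with $c_1=(-1)^{-w}g(q^{-a})q^{aw}/\Gamma(w)$ (test $f(u)=(u-q^{-a})^{-1}$, where $a_n=-q^{a(n+1)}$); the displayed formula for $Q$, and the paper's Step 1 which identifies the main term of $a_n$ with $+\operatorname{Res}$, carry an overall sign error, so your Leibniz expansion will produce $-Q$ as printed rather than $Q$ --- keep your sign and do not adjust it to match the printed polynomial.
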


The proof will be divided into three steps.\\
\rm{1}. If $w$ is a positive integer, then we adapt an idea of Rosen \cite[Theorem 17.4]{Ros}. But we remove the condition that the pole  lie at $u=q^{-1}$. In this way, we are able to obtain  the polynomial of $Q(n)$ precisely  for $w \geq 1$.\\
\rm{2}. If $0<w<1$, then we use a keyhole shape contour to carry out needed estimations.\\
\rm{3}. If $w>0$ is not a positive integer, then we proceed by induction on $\lfloor w\rfloor$, since we have proved the case $\lfloor w \rfloor
=0$ in step $\rm{2}$.

\bf{Step} \rm{1}:
\begin{proof}
  The case $w=1$ and the equation (\ref{17.4}) is covered in \cite[Theorem 17.1]{Ros} and \cite[Theorem 17.4]{Ros};
we give here exact formula for $c_i$ for $1\leq i \leq w$ when $w >1$.
We take a $0<\delta<1$ such that $g(u)$ and $h(u)$ are analytic on the disc : $\{s\in \mathbb{C}: |u|\leq q^{-a(1-\delta)}\}$. Let $C$ be the boundary of this disc oriented counterclockwise and $C_\epsilon$ a small circle about $u=0$ oriented clockwise. We have
$$\frac{1}{2\pi i}\displaystyle\oint_{C_\epsilon+C}\frac{f(u)}{u^{n+1}}du=-a_n+\frac{1}{2\pi i}\displaystyle\oint_{C}\frac{f(u)}{u^{n+1}}du.$$
Observe that
\begin{align}
\frac{1}{2\pi i}\displaystyle\oint_{C_\epsilon+C}\frac{f(u)}{u^{n+1}}du
=&\mbox{Res}\left(\frac{g(u)}{(u-q^{-a})^wu^{n+1}} ,q^{-a}\right)\notag\\
=&\mbox{Res}\left(\displaystyle\sum_{j=1}^\infty\frac{g^{(j)}(q^{-a})}{j!(u-q^{-a})^{w-j}u^{n+1}} ,q^{-a}\right)\notag\\
=&\frac{g^{(w-1)}(q^{-a})}{(w-1)!}q^{a(n+1)}+\displaystyle\sum_{j=0}^{w-2}\frac{g^{(j)}(q^{-a})(-1)^{w-j-1}}{j!(w-j-1)!}\left[\displaystyle\prod_{\ell=1}^{w-j-1}(n+\ell)\right]\cdot q^{a(n+w-j)}.
 \notag
\end{align}
and for any $\epsilon > 0$,
$$\frac{1}{2\pi i}\displaystyle\oint_{C}\frac{f(u)}{u^{n+1}}du\ll q^{(a-\delta+\epsilon)N}.$$
The proof is therefore complete for arbitrary positive integer $w>1$.
\end{proof}

{\bf{Step}} $\rm{2}$:

We use the contour which consists of a small circle $C_{0}$ with the center at the origin and a keyhole contour (cf. \cite[Section 3]{CKY} for more details). The keyhole contour consists of a small circle $C_{\epsilon}$ about the $q^{-a}$ of radius $\epsilon$, extending to a line segment $\gamma_1$  parallel and close to the positive real axis but not touching it, to an almost full circle $C_1$, returning to a line segment parallel $\gamma_2$, close, and below the positive real axis in the negative sense, returning to the small circle.
 Therefore,  for each $n \in\mathbb{N}$, one has
\begin{itemize}
\item[(a)]
$
a_{n} = \frac{1}{2 \pi i}\oint_{C_1 + \gamma_1 + C_{\epsilon} +{\gamma_2}} \frac{f(u)}{u^{n+1}} du.
$
\item[(b)]$
\int_{C_1}\frac{f(u)}{u^{n + 1}} du =   o\left(\frac{q^{an}}{ n^{2-w}}\right), \mbox{ as }n\rightarrow \infty.$
\item[(c)]$
\lim_{\epsilon \rightarrow 0^+}\displaystyle\int_{C_\epsilon} \frac{f(u)}{u^{n + 1}} du=0.
$
\item[(d)]For all $0<w<1$, we have
$$
\aligned
\lim_{\epsilon \rightarrow 0^+}\frac{1}{2\pi i}\cdot\int_{\gamma_1 + \gamma_2}\frac{f(u)}{u^{n + 1}} du =&\frac{g(q^{-a})(-a)^{1-w}q^{aw}}{\Gamma(w)}\left(\frac{q^{an}}{(an)^{1-w}}\right)\\
&-\frac{(-1)^{-w}}{\Gamma(w-1)}q^{aw}\notag \cdot \left[g(q^{-a})(\frac{w}{2})-\frac{g'(q^{-a})}{q^a} \right]\left(\frac{q^{an}}{n^{2-w}}\right)\notag\\ &+O(\frac{q^{an}}{n^{3-w}}),\mbox{ as }n\rightarrow \infty.\notag
\endaligned
$$
\end{itemize}
Combining (a)$\sim$(d),  we arrive at this theorem for case $0<w<1$.

\bf{Step} \rm{3}:
\begin{proof}
We proceed by induction on $\lfloor w\rfloor$.  The case $\lfloor w \rfloor=0$  has been proved. Assume that Theorem \ref{thm-T} holds for  all $\lfloor w\rfloor\leq N$.
Suppose $\lfloor w\rfloor=N+1$, we consider the following function
\begin{align}
\hat{f}(u):=&f(u)\cdot (u-q^{-a})\notag\\ =&g(u)\cdot (u-q^{-a})^{-(w-1)}+h(u)\cdot (u-q^{-a}), \notag
\end{align}
We now write $\hat{f}(u) = \sum_{n\geq0} b_nu^n$, where $b_n= -q^{-a}a_n+a_{n-1}$ for all $n\geq 0$ and set $a_{-1}=0$.
Note that $\hat{f}(u)$ is still a meromorphic  function on $$\{ u\in \mathbb{C}-[q^{-a},\infty): |u|<q^{-a(1-\delta')}\}$$
for some $\delta'>0$ and $\displaystyle\lim_{u\rightarrow q^{-a}}(u-q^{-a})^{w-1}\cdot \hat{f}(u)=g(q^{-a})\not=0$.
Solving $a_n$ from $b_n$,  we derive that
\begin{align}
a_n=-\displaystyle\sum_{k=0}^{n}q^{a(k+1)}\cdot b_{n-k} \mbox{ for }n\geq 0.\label{T-1}
\end{align}
Since $\displaystyle\lim_{r\rightarrow q^{-a}}(u-q^{-a})^{w-1} \cdot \hat{f}(u)\not=0$  and  $\lfloor w-1 \rfloor\leq N$, we have
$$
b_{n}=(-1)^{1-w}\cdot\frac{g(q^{-a})q^{a(w-1)}}{\Gamma(w-1)}\left(\frac{q^{an}}{n^{2-w}}\right)+o\left(\frac{q^{an}}{n^{2-w}}\right)
$$
by induction hypothesis. Combining (\ref{T-1}) and the above formula of $b_n$ implies that
$$a_n=(-1)^{-w}\frac{g(q^{-a}) q^{a(w-1)}}{\Gamma(w-1)} \cdot     \left(q^{a(n+1)}\right)\cdot \left(\displaystyle \sum_{k=1}^{n}k^{w-2} \right)+o\left(\frac{q^{an}}{n^{1-w}}\right).$$
Note that for $w> 1$, we have
$$\displaystyle \sum_{k=1}^{n}k^{w-2}=\left(\frac{n^{w-1}}{w-1}\right)+o(n^{w-1}).$$ Hence
\begin{align}
a_n=&(-1)^{-w}\frac{g(q^{-a}) q^{a(w-1)}}{\Gamma(w-1)} \cdot     \left(q^{a(n+1)}\right)\cdot \left[ \left(\frac{n^{w-1}}{w-1}\right)+o(n^{w-1}) \right]+o\left(\frac{q^{an}}{n^{1-w}}\right)
\notag\\
=&(-1)^{-w}\frac{g(q^{-a}) q^{aw}}{\Gamma(w)} \cdot \left(\frac{q^{an}}{n^{1-w}}\right)+o\left(\frac{q^{an}}{n^{1-w}}\right).
 \notag
\end{align}
The proof is complete.
\end{proof}

\end{document}